\newcommand{\C}{\mathbb C}
\newcommand{\Q}{\mathbb Q}
\newcommand{\R}{\mathbb R}
\newcommand{\frakX}{\mathfrak{X}}
\newcommand{\calG}{\mathcal{G}}
\newcommand{\calK}{\mathcal{K}}
\newcommand{\calO}{\mathcal{O}}
\numberwithin{equation}{section}
\theoremstyle{remark}
\newtheorem*{rmk*}{Remark}
\newtheorem{rmk}[equation]{Remark}
\theoremstyle{plain}
\newtheorem{theorem}[equation]{Theorem}
\newtheorem*{thma}{Main Theorem}
\newtheorem*{theoremrefom}{Theorem 2.1. (Algebraic Group Reformulation)}
\newtheorem{lemma}[equation]{Lemma}
\title{On the isospectral orbifold-manifold problem for nonpositively curved locally symmetric spaces}
\author{Benjamin Linowitz}
\address{Department of Mathematics\\ 
530 Church Street\\
Ann Arbor, MI, 48109 USA}
\email{linowitz@umich.edu}
\author{Jeffrey S. Meyer}
\address{Department of Mathematics\\ 
University of Oklahoma\\ 
Norman, OK 73019 USA}
\email[]{jmeyer@math.ou.edu}
\begin{document}

\begin{abstract} 
An old problem asks whether a Riemannian manifold can be isospectral to a Riemannian orbifold with nontrivial singular set. In this short note we show that under the assumption of Schanuel's conjecture in transcendental number theory, this is impossible whenever the orbifold and manifold in question are length-commensurable compact locally symmetric spaces of nonpositive curvature associated to simple Lie groups.
\end{abstract}

\maketitle

\section{Introduction}

This paper concerns a problem in the inverse spectral geometry of Riemannian orbifolds. These orbifolds generalize the notion of Riemannian manifolds and are defined so that each point has a neighborhood which can be identified with the quotient of an open subset of a Riemannian manifold by a finite group of isometries. We will omit the precise definition of Riemannian orbifolds (see for instance \cite{Satake, Thurston, OrbifoldStringy, GordonSurvey}) because all of the orbifolds considered in this paper belong to the well-behaved class of \emph{good orbifolds}.

A good Riemannian orbifold $\calO$ is the quotient of a Riemannian manifold $(M,g)$ by a discrete group $\Gamma$ acting by isometries on $M$. We define the space $C^\infty(\calO)$ of smooth functions on $\calO$ to be the space $C^\infty(M)^{\Gamma}$ of $\Gamma$-invariant functions on $M$. The Laplace operator $\Delta_g$ of $(M,g)$ commutes with isometries and therefore preserves $\Gamma$-invariance and hence the space $C^\infty(\calO)$. Its restriction to $C^\infty(\calO)$ is the Laplace operator (acting on functions) of $\calO$. Whenever $\calO$ is compact and connected the Laplace operator of $\calO$ has a discrete spectrum of eigenvalues $0=\lambda_0<\lambda_1\leq\cdots$ where $\lambda_k\rightarrow \infty$ as $j\rightarrow \infty$ and where each eigenvalue occurs with finite multiplicity. Two compact Riemannian orbifolds are said to be isospectral if they have the same eigenvalue spectra. For a more thorough discussion of the spectral geometry of orbifolds we refer the reader to the survey of Gordon \cite{GordonSurvey}.

It is natural to ask whether the spectrum detects the presence of singularities; that is, can a Riemannian manifold be isospectral to a Riemannian orbifold with a nontrivial singular set? While this problem remains open, several partial results have been proven. In dimension two, Dryden and Strohmaier have shown \cite{Dryden} that the Laplace spectra of two orbisurfaces are the same if and only if the length spectrum and the number of cone points of each order are the same. An application of the heat kernel expansion shows that an even (respectively odd) dimensional orbifold whose singular set contains an odd (respectively even) dimensional stratum cannot be isospectral to a manifold \cite{Gordon2008}. If an orbifold with nontrivial singular set and a manifold have a common Riemannian cover then they cannot be isospectral \cite{GorRos}. A result with a similar flavor was obtained by Sutton \cite{Sutton}, who showed that if a good orbifold with nontrivial singular set and a manifold are isospectral then they do not admit nontrivial Riemannian covers that are isospectral. In a different direction, if one instead considers the Hodge Laplacian acting on $p$-forms in the middle degree, then it is known \cite{GorRos} that an orbifold with nontrivial singular set can be isospectral to a manifold. We note that Rossetti, Schueth, and Weilandt \cite{Rossetti} have produced examples of isospectral orbifolds whose singular sets, while nontrivial, are remarkably different. One of their examples for instance, exhibits Sunada-isospectral (hence strongly isospectral) orbifolds with different maximal isotropy orders.

In this paper we will consider the setting of compact locally symmetric spaces of nonpositive curvature which are associated to simple Lie groups and show that under mild assumptions, an orbifold with nontrivial singular set cannot be isospectral to a manifold. Before stating our main theorem however, we set up some notation. Given a real semisimple Lie group $\calG$ with maximal compact subgroup $\calK$, let $\frakX=\calK\backslash \calG$ be the associated symmetric space. If $\Gamma\subset \calG$ is a discrete subgroup then the quotient $\frakX_{\Gamma}=\frakX/\Gamma$ is a locally symmetric orbifold which is a manifold when $\Gamma$ is torsion-free. 

%
%
We call $\mathcal{G}$  \textit{simple} if the complexification of its Lie algebra is simple.  
We call two simple Lie groups \textit{twins} if one is locally isomorphic to $\mathbf{SO}(n+1,n)$ and the other is locally isomorphic to $\mathbf{Sp}_{2n}(\R)$.

Given a locally symmetric orbifold $\frakX_{\Gamma}$, we define a \textit{geodesic} on $\frakX_{\Gamma}$ to be a curve that lifts to a geodesic on $\frakX$. We note that some authors use the term geodesic to instead refer to locally length minimizing curves.
Whereas these two notions coincide on manifolds, this is not the case on orbifolds \cite[2.4]{GordonSurvey}.
The \textit{rational length spectrum} of $\frakX_{\Gamma}$ is the set
$$\Q L(\frakX_{\Gamma})=\{a\lambda\ | \mbox{ where $a\in \Q$ and $\lambda$ is the length of a closed geodesic in $\frakX_{\Gamma}$}\}.$$ 
Two orbifolds $\frakX_{\Gamma_1}$ and $\frakX_{\Gamma_2}$ are \textit{length-commensurable} if $\Q L(\frakX_{\Gamma_1})=\Q L(\frakX_{\Gamma_2})$.
It is easy to see, for instance, that commensurable orbifolds are always length-commensurable.


\begin{thma}\label{thm:main}
Let $\frakX_{\Gamma_1}=\frakX_1/\Gamma_1$ and $\frakX_{\Gamma_2}=\frakX_2/\Gamma_2$ be compact locally symmetric spaces of nonpositive curvature which are associated to simple Lie groups that are not twins.
Suppose $\frakX_{\Gamma_1}$ is a manifold and $\frakX_{\Gamma_2}$ is an orbifold with nontrivial singular set, and if either of the symmetric spaces $\frakX_{\Gamma_1}$ or $\frakX_{\Gamma_2}$ have real rank greater than one, assume Schanuel's conjecture in transcendental number theory.
If $\frakX_{\Gamma_1}$ and $\frakX_{\Gamma_2}$ are length-commensurable then $\frakX_{\Gamma_1}$ and $\frakX_{\Gamma_2}$ cannot be isospectral. 
\end{thma}

Recall that the work of Duistermaat and Guillemin \cite{DuistermaatGuillemin} and Duistermaat, Kolk, and Varadarajan \cite{DKV} shows that the set of lengths of closed geodesics on a compact locally symmetric manifold of nonpositive curvature are determined by the manifold's Laplace spectrum (see also \cite[Theorem 10.1]{PrasadRap}). Were this result to be extended to compact locally symmetric orbifolds of nonpositive curvature, our \hyperref[thm:main]{Main Theorem} could be strengthened in that the assumption that $\frakX_{\Gamma_1}$ and $\frakX_{\Gamma_2}$ are length-commensurable could be removed.

The trace formula of Duistermaat and Guillemin \cite{DuistermaatGuillemin} has in fact been extended to the orbifold setting \cite{stanhope-uribe, sandoval}. However, the Maslov indices appearing in the orbifold trace formula do not have a variational interpretation as in the manifold setting, where the Maslov index of a closed geodesic is equal to its Morse index in the space of closed loops and hence is zero when the manifold in question has nonpositive curvature. In particular it is not known whether or not they are all zero in the context of this paper. This prevents one from concluding that the eigenvalues of the Laplacian determine the set of lengths of closed geodesics on a compact locally symmetric orbifold of nonpositive curvature.\\


%

\paragraph*{\textbf{Acknowledgements.}}

The authors would like to thank Mikhail Belolipetsky for initially bringing this problem to their attention and Carolyn Gordon, Alejandro Uribe and David Webb for useful conversations on the material in this article. The first author was partially supported by an NSF RTG grant DMS-1045119 and an NSF Mathematical Sciences Postdoctoral Fellowship.




\section{Proof of Main Theorem}

%

We now prove our \hyperref[thm:main]{Main Theorem}. 
We will do so by contradiction. 
To that end suppose that $\frakX_{\Gamma_1}$ and $\frakX_{\Gamma_2}$ are isospectral.
By Selberg's lemma there exists a finite index, torsion free subgroup $\Gamma_2^\prime \subset \Gamma_2$. 
In particular $\frakX_{\Gamma_2^\prime}$ is a finite degree manifold cover of $\frakX_{\Gamma_2}$. A consequence of this is that the manifolds $\frakX_{\Gamma_1}$ and $\frakX_{\Gamma_2^\prime}$ are length-commensurable. 

In \cite[Cor. 8.14]{PrasadRap}, it was shown that length-commensurability of  $\frakX_{\Gamma_1}$ and $\frakX_{\Gamma_2'}$ implies a technical algebraic condition called \textit{weak commensurability} relating the lattices $\Gamma_1\subset \mathcal{G}_1$ and $\Gamma_2'\subset \mathcal{G}_2$.   
This implication is unconditional when  $\mathrm{rank}(\frakX_{\Gamma_1})=\mathrm{rank}(\frakX_{\Gamma_2})=1$, but for higher rank is conditional upon the truth of Schanuel's conjecture.   
The precise formulation of this condition requires realizing our Lie groups as the real points of algebraic $\R$-groups and we refer the reader to  \cite[Def. 1.1]{PrasadRap} for details.  

We now show that weak commensurability of lattices determines the local geometry of their corresponding locally symmetric spaces.

\begin{theorem}\label{isometriccover}
Let $\mathcal{G}_1$ and $\mathcal{G}_2$ be nontwin, noncompact, simple, semisimple Lie groups with trivial centers
and let $\Gamma_1\subset \mathcal{G}_1$ and $\Gamma_2'\subset \mathcal{G}_2$ be weakly commensurable lattices.  Then $\mathfrak{X}_{\Gamma_1}$ and $\mathfrak{X}_{\Gamma_2'}$ have isometric universal covers.
\end{theorem}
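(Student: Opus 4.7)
The plan is to deduce that the ambient real Lie groups $\mathcal{G}_1$ and $\mathcal{G}_2$ are isomorphic (not merely of the same absolute Killing--Cartan type), whence the Riemannian symmetric spaces $\mathfrak{X}_1$ and $\mathfrak{X}_2$, which serve as the universal covers of $\mathfrak{X}_{\Gamma_1}$ and $\mathfrak{X}_{\Gamma_2'}$, are isometric.

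First, since each $\mathcal{G}_i$ is simple, noncompact, and centerless, I would realize it as the group of real points of an absolutely simple algebraic $\mathbb{R}$-group $G_i$, and invoke Borel density to conclude that the lattice $\Gamma_i \subset G_i(\mathbb{R})$ is Zariski-dense. The main structural theorem of Prasad--Rapinchuk on weak commensurability \cite[Thm.~7.5]{PrasadRap} then applies: weakly commensurable Zariski-dense subgroups of two absolutely simple algebraic groups force the ambient groups to have the same Killing--Cartan type, with the single exception of one being of type $B_n$ and the other of type $C_n$. Our nontwin hypothesis rules out precisely this exception, so $G_1$ and $G_2$ share a common absolute type.

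The main obstacle is upgrading equality of absolute types to equality of \emph{real forms}, since for a given complex type there are in general several inequivalent noncompact real forms with non-isometric symmetric spaces (e.g.\ $\mathbf{SU}(p,q)$ for various $(p,q)$). Weak commensurability, however, is defined in terms of eigenvalues of semisimple elements in the lattice, and these eigenvalues detect real-structural invariants such as the $\mathbb{R}$-rank and the structure of maximal $\mathbb{R}$-split tori. Using the eigenvalue analysis in \cite[\S 9]{PrasadRap}, one verifies that these real invariants of $G_1$ and $G_2$ must also match, and the classification of real forms of a given absolute type then pins down a unique real form, so $\mathcal{G}_1 \cong \mathcal{G}_2$ as real Lie groups.

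Finally, once $\mathcal{G}_1 \cong \mathcal{G}_2$ and both have trivial center, the maximal compact subgroups $\mathcal{K}_1$ and $\mathcal{K}_2$ are conjugate under this isomorphism, and consequently $\mathfrak{X}_1 = \mathcal{K}_1\backslash \mathcal{G}_1$ is isometric to $\mathfrak{X}_2 = \mathcal{K}_2\backslash \mathcal{G}_2$ as Riemannian symmetric spaces. These are the universal covers of $\mathfrak{X}_{\Gamma_1}$ and $\mathfrak{X}_{\Gamma_2'}$, completing the argument.
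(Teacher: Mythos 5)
Your overall skeleton---same absolute Killing--Cartan type from Prasad--Rapinchuk plus the nontwin hypothesis, then an upgrade to the same real form, then isometric symmetric spaces---is the same strategy as the paper's. But the middle step, which is the real content of Theorem \ref{isometriccover}, is exactly where your argument has a genuine gap, in two respects. First, the assertion that matching $\R$-rank (together with the absolute type) ``pins down a unique real form'' is false. For instance, $\mathfrak{su}(m-1,m+1)$ and $\mathfrak{sl}_m(\HH)$ both complexify to $\mathfrak{sl}_{2m}(\C)$ and both have real rank $m-1$, yet their symmetric spaces are not isometric; similarly $\mathfrak{so}^*(2n)$ and $\mathfrak{so}(\lfloor n/2\rfloor,\,2n-\lfloor n/2\rfloor)$ share the complexification $\mathfrak{so}_{2n}(\C)$ and the real rank. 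This is precisely why the paper proves Lemma \ref{typeandrank}, whose condition (S3) requires, in the $\mathfrak{sl}_n(\C)$ and $\mathfrak{so}_{2n}(\C)$ cases, that the maximal compact subalgebras agree as well; the paper extracts that extra invariant from the equality of the Tits indices over $\R$ (\cite[Theorem 6]{PrasadRap}), not from the rank alone.

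Second, your appeal to ``the eigenvalue analysis in \cite[\S 9]{PrasadRap}'' is not an argument, and the Prasad--Rapinchuk inputs you would need (equality of Tits indices, or even of $\R$-ranks) are established there for \emph{arithmetic} lattices. The paper's proof therefore runs through a case division that your proposal omits entirely: it first uses \cite[Theorem 7]{PrasadRap} (weak commensurability preserves arithmeticity), handles the arithmetic case via the Tits-index theorem as above, and handles the case in which neither lattice is arithmetic by invoking Margulis arithmeticity \cite{Mar} and Gromov--Schoen \cite{GroSch} to force both groups to be locally isomorphic to $\mathbf{SO}(n,1)$ or to $\mathbf{SU}(n,1)$, where the conclusion is immediate. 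As written, your proposal neither treats the nonarithmetic case nor supplies a substitute for the Tits-index input, so the crucial step from common absolute type to a common real form is unsupported.
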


Before we can prove this theorem, we will need a technical lemma.
Let $\mathfrak{g}$ and $\mathfrak{k}$ denote the Lie algebras of $\mathcal{G}$ and $\mathcal{K}$ respectively and let $\mathfrak{g}=\mathfrak{k}\oplus \mathfrak{a}\oplus \mathfrak{n}$ be an Iwasawa decomposition.
The \textit{$\R$-rank} of  $\mathcal{G}$, denoted $\mathrm{rank}_{\R}(\mathcal{G})$, is the dimension of $\mathfrak{a}$.
It is well known that $\mathrm{rank}(\mathfrak{X})$, the dimension of a maximal flat in $\mathfrak{X}$, is equal to $\mathrm{rank}_{\R}(\mathcal{G})$.

\begin{lemma}\label{typeandrank}
Let $\mathcal{G}_1$ and $\mathcal{G}_2$ be connected, noncompact, simple, semisimple Lie groups
and let $\mathfrak{X}_1$  and $\mathfrak{X}_2$ be their associated globally symmetric spaces.  Then $\mathfrak{X}_1$ and $\mathfrak{X}_2$ are isometric if and only if 
\begin{enumerate}[\quad (S1)]
\item $\mathrm{rank}(\mathfrak{X}_1)=\mathrm{rank}(\mathfrak{X}_2)$,
\item $\mathfrak{g}_1\otimes_\R \C$ and $\mathfrak{g}_2\otimes_\R \C$ are isomorphic as complex Lie algebras, 
\item and, in the case that $\mathfrak{g}_1\otimes_\R \C$ and $\mathfrak{g}_2\otimes_\R \C$ are either both  isomorphic to $\mathfrak{sl}_n(\C)$ or both isomorphic to $\mathfrak{so}_{2n}(\C)$,  $\mathfrak{k}_1$ and $\mathfrak{k}_2$ are isomorphic as real Lie algebras.
\end{enumerate}
\end{lemma}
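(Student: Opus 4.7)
The plan is as follows. For the forward direction, I would invoke the standard fact that for a Riemannian globally symmetric space of noncompact type, the identity component of the full isometry group recovers the associated semisimple Lie group modulo its (in our setting trivial) center. Hence an isometry $\mathfrak{X}_1\to\mathfrak{X}_2$ furnishes an isomorphism $\mathfrak{g}_1\cong\mathfrak{g}_2$ of real Lie algebras, which gives (S2) directly. Condition (S1) follows because real rank is an intrinsic invariant of $\mathfrak{g}$, and (S3) follows because Cartan involutions of a noncompact simple real Lie algebra are unique up to conjugacy, so after composition with an inner automorphism the isomorphism can be arranged to send $\mathfrak{k}_1$ to $\mathfrak{k}_2$.

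For the reverse direction I would first reduce to the statement that (S1)--(S3) force $\mathfrak{g}_1\cong\mathfrak{g}_2$ as real Lie algebras: an irreducible noncompact globally symmetric space is determined up to isometry by its orthogonal symmetric pair $(\mathfrak{g},\mathfrak{k})$, and this pair is itself determined by $\mathfrak{g}$ alone (again by uniqueness of the Cartan involution up to conjugacy). Condition (S2) then fixes the common complexification $\mathfrak{g}^{\C}$, and the remaining task is to verify, within each complex type, that the real rank---together with the isomorphism class of the maximal compact when necessary---determines the noncompact real form. I would carry this out by inspecting Cartan's classification tables. For the classical types $B_n, C_n$ and the exceptional types $F_4, G_2, E_7, E_8$, the real rank already separates the noncompact real forms, so (S1) and (S2) suffice. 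The genuine ambiguities arise in type $A_n$, where $\mathfrak{su}^*(2m)$ and $\mathfrak{su}(m{+}1,m{-}1)$ share real rank $m-1$, and in type $D_n$, where $\mathfrak{so}^*(2n)$ can share its real rank with a suitable $\mathfrak{so}(p,q)$. In each of these coincidences the candidate maximal compact subalgebras---$\mathfrak{sp}(m)$ versus $\mathfrak{s}(\mathfrak{u}(m{+}1)\oplus\mathfrak{u}(m{-}1))$, and $\mathfrak{u}(n)$ versus $\mathfrak{so}(p)\oplus\mathfrak{so}(q)$---are visibly non-isomorphic as real Lie algebras, so (S3) selects the correct real form and forces $\mathfrak{g}_1\cong\mathfrak{g}_2$.

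The main obstacle is not conceptual but combinatorial: the content of the lemma is essentially the assertion that the triple (complex type, real rank, isomorphism class of maximal compact subalgebra) forms a complete invariant for noncompact simple real Lie algebras, and that the third invariant is needed precisely when the complex type is $\mathfrak{sl}_n$ or $\mathfrak{so}_{2n}$. Carrying this out amounts to a finite check against the Cartan--Helgason list of real forms, together with the observation that the accidental isomorphism $D_3\cong A_3$ reduces the apparent low-rank coincidence $\mathfrak{so}^*(6)\cong \mathfrak{su}(3,1)$ versus $\mathfrak{so}(5,1)\cong\mathfrak{su}^*(4)$ to the $A_n$ case already treated.
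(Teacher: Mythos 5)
Your skeleton parallels the paper's proof: the forward direction via the isometry group and conjugacy of Cartan involutions, and the reverse direction by reducing (as the paper does via Helgason, Cor.~VI.1.3) to showing $\mathfrak{g}_1\cong\mathfrak{g}_2$ as real Lie algebras and then invoking the classification of real forms. The difference is that the paper outsources the classification step to Helgason (Thm.~6.2 and the remarks following it), whereas you propose to run the finite check against Cartan's tables yourself. That makes the completeness of your case analysis the entire content of the argument, and it is not complete.

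Concretely, your list of exceptional types is $F_4$, $G_2$, $E_7$, $E_8$; you never address $E_6$, and $E_6$ is exactly where the claimed dichotomy fails. The two noncompact real forms $\mathfrak{e}_{6(-14)}$ (EIII, maximal compact $\mathfrak{so}(10)\oplus\R$) and $\mathfrak{e}_{6(-26)}$ (EIV, maximal compact $\mathfrak{f}_4$) both have real rank $2$ (restricted root systems $BC_2$ and $A_2$ respectively), both complexify to $\mathfrak{e}_6(\C)$, and (S3) is vacuous since the complex type is neither $\mathfrak{sl}_n(\C)$ nor $\mathfrak{so}_{2n}(\C)$; yet the associated symmetric spaces have dimensions $32$ and $26$ and so are not isometric. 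Thus the step ``within each remaining complex type, real rank separates the noncompact real forms'' is false at $E_6$, and your argument cannot conclude $\mathfrak{g}_1\cong\mathfrak{g}_2$ there --- indeed this case shows the table check cannot be completed as envisioned, so the omission is a genuine gap and not routine bookkeeping. (It also indicates that conditions (S1)--(S3) as stated do not suffice in type $E_6$; in the paper's application one in fact gets equality of Tits indices from weak commensurability, which does distinguish EIII from EIV, but a proof of the lemma itself must confront this case, e.g.\ by using a finer invariant such as the character $\dim\mathfrak{p}-\dim\mathfrak{k}$ or the isomorphism class of $\mathfrak{k}$.) A smaller inaccuracy: in type $D_4$ the candidate maximal compacts $\mathfrak{u}(4)$ and $\mathfrak{so}(6)\oplus\mathfrak{so}(2)$ are isomorphic, so they are not ``visibly non-isomorphic''; the conclusion survives there only because triality gives $\mathfrak{so}^*(8)\cong\mathfrak{so}(6,2)$, and your write-up should say so.
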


\begin{proof}
If $\mathfrak{X}_1$ and $\mathfrak{X}_2$ are isometric, it follows that $\mathcal{G}_1/Z(\mathcal{G}_1)$ and $\mathcal{G}_2/Z(\mathcal{G}_2)$  are isomorphic, where $Z(\mathcal{G}_i)$ denotes the center of $\mathcal{G}_i$.  Properties (S1)-(S3) readily follow. 
Now suppose properties (S1)-(S3).  By \cite[Cor. VI.1.3 ]{Hel}, its suffices to show that $\mathfrak{g}_1$ and $\mathfrak{g}_2$ are isomorphic as real Lie algebras.  By the remarks after \cite[Thm. 6.2]{Hel}, (S1) and (S2) imply this result in all cases except when $\mathfrak{g}_1\otimes_\R \C$ and $\mathfrak{g}_2\otimes_\R \C$ are either both  isomorphic to $\mathfrak{sl}_n(\C)$ or are both isomorphic to $\mathfrak{so}_{2n}(\C)$.  In this case, (S3) together with \cite[Thm. 6.2]{Hel} imply the desired result.
\end{proof}

\begin{proof}[Proof of Theorem \ref{isometriccover}]
In light of \cite[Theorem 1]{PrasadRap} and  \cite[Theorem 1.2]{GarRap}, weak commensurability of $\Gamma_1$ and $\Gamma_2'$ implies either $\mathfrak{g}_1\otimes_\R \C$ and $\mathfrak{g}_2\otimes_\R \C$ are isomorphic as complex Lie algebras or $\mathcal{G}_1$ and $\mathcal{G}_2$ are twins.
By our assumption, we may assume the former.
Suppose that one of the lattices is arithmetic.  
By \cite[Theorem 7]{PrasadRap} both are arithmetic.  
By \cite[Theorem 6]{PrasadRap}, the Tits index of the algebraic $\R$-groups associated to these two groups are the same,  hence $\mathrm{rank}_\R(\mathcal{G}_1)=\mathrm{rank}_\R(\mathcal{G}_2)$, from which it follows $\mathrm{rank}(\mathfrak{X}_1)=\mathrm{rank}(\mathfrak{X}_2)$.  
In the two cases of (S3), the Tits index determines the isomorphism classes of $\mathfrak{k}_1$ and $\mathfrak{k}_2$ and hence by Lemma \ref{typeandrank} the result holds.  Now suppose neither lattice is arithmetic.  By the results of \cite{Mar} and \cite{GroSch}, and the observation that $\mathfrak{g}_1\otimes_\R \C$ and $\mathfrak{g}_2\otimes_\R \C$ are isomorphic, 
either $\mathcal{G}_1$ and $\mathcal{G}_2$ are both locally isomorphic as Lie groups to $\mathbf{SU}(n,1)$ or are both locally isomorphic as Lie groups to $\mathbf{SO}(n,1)$.   
In either case, the result holds.
\end{proof}

In particular this shows that the universal covers of $\frakX_{\Gamma_1}$ and $\frakX_{\Gamma_2}$ are isometric. The proof of our \hyperref[thm:main]{Main Theorem} now follows from Proposition 3.4 of \cite{GorRos}, which implies that $\frakX_{\Gamma_1}$ and $\frakX_{\Gamma_2}$ cannot have a common Riemannian cover and in particular have nonisometric universal covers.


\begin{rmk}
Theorem \ref{isometriccover} is a rigidity result, which, when restated in the language of algebraic groups, 
can be viewed as a weakening of the assumptions of Mostow--Prasad rigidity \cite[Theorem B]{Prasad} from isomorphic lattices to weakly commensurable lattices.
As this formulation may be of independent interest, we state it here.
\end{rmk}

\begin{theoremrefom}\label{rgroups}
Let $\mathbf{G}_1$ and $\mathbf{G}_2$ be connected, $\R$-isotropic, absolutely simple, semisimple, algebraic $\R$-groups whose $\R$-points are not twins.
If $\Gamma_1\subset \mathbf{G}_1(\R)$ and $\Gamma_2'\subset \mathbf{G}_2(\R)$ are weakly commensurable lattices then $\mathbf{G}_1$ and $\mathbf{G}_2$ are $\R$-isomorphic.
\end{theoremrefom}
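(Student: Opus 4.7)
The plan is to deduce Theorem 2.1 directly from Theorem \ref{isometriccover} by translating between absolutely simple algebraic $\R$-groups and their groups of real points. First I would set $\mathcal{G}_i = (\mathbf{G}_i/Z(\mathbf{G}_i))(\R)^\circ$, the identity component of the real points of the adjoint form. The algebraic hypotheses then transfer cleanly: $\R$-isotropy of $\mathbf{G}_i$ makes $\mathcal{G}_i$ noncompact, absolute simplicity of $\mathbf{G}_i$ makes $\mathfrak{g}_i \otimes_\R \C$ simple (so $\mathcal{G}_i$ is simple in the excerpt's sense) and gives it trivial center by construction, and the ``not twins'' assumption on $\R$-points is precisely the hypothesis of Theorem \ref{isometriccover}. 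The lattices $\Gamma_1, \Gamma_2'$ push down to weakly commensurable lattices in the $\mathcal{G}_i$.

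With this set-up, Theorem \ref{isometriccover} yields an isometry of the universal covers, i.e.\ of the globally symmetric spaces $\mathfrak{X}_1$ and $\mathfrak{X}_2$. By Lemma \ref{typeandrank}, this is equivalent to the three conditions (S1)--(S3). Read at the level of algebraic groups these say: (S2) becomes equality of the absolute root systems of $\mathbf{G}_1$ and $\mathbf{G}_2$; (S1) becomes $\mathrm{rank}_\R(\mathbf{G}_1)=\mathrm{rank}_\R(\mathbf{G}_2)$; and (S3) pins down the anisotropic kernel in the two ambiguous absolute types ($A_n$ and $D_{2n}$). Together these data specify the Tits index of each $\mathbf{G}_i$ over $\R$, and by the Tits classification of semisimple algebraic groups over $\R$, two absolutely simple $\R$-isotropic groups with the same absolute root system and the same Tits index are $\R$-isomorphic.

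The subtlest step, and the one I expect to need the most care, is the $A_n$ and $D_{2n}$ cases of (S3). There, multiple inequivalent $\R$-forms can share absolute type and $\R$-rank --- for instance the various signatures of unitary groups $\mathbf{SU}(p,q)$ with fixed $p+q$ --- so the absolute Dynkin diagram together with $\R$-rank alone does not disambiguate them. The isomorphism of maximal compact Lie subalgebras supplied by (S3) is precisely the extra datum that fixes the signature and hence the Tits index. Once this translation is carried out, one still needs to check that the $\R$-isogeny class is pinned down, not merely the real Lie algebra; this is assured by the fact that the inputs to the proof of Theorem \ref{isometriccover} (the Prasad--Rapinchuk Tits-index theorem in the arithmetic case, and the theorems of Margulis and Gromov--Schoen in the non-arithmetic case) operate directly on the algebraic groups, producing an $\R$-isomorphism rather than a mere isogeny.
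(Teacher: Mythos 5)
Your proposal is essentially the paper's own route: the paper gives no independent argument for this statement, presenting it (in the preceding Remark) as a direct restatement of Theorem \ref{isometriccover} in the language of algebraic groups, and your translation through the adjoint real points, Lemma \ref{typeandrank}, and the Tits index is exactly that dictionary made explicit. The one caveat is your closing claim that the $\R$-isogeny class is pinned down because the cited inputs ``operate directly on the algebraic groups'': weak commensurability is insensitive to central isogeny, so the Prasad--Rapinchuk machinery can only determine $\mathbf{G}_1$ and $\mathbf{G}_2$ up to their adjoint quotients, and the statement should be read with the same centerless convention as the ``trivial centers'' hypothesis of Theorem \ref{isometriccover}, under which your argument is complete.
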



\end{document}